\renewcommand{\footnote}{\endnote}
\newtheorem{theorem}{Theorem}[section]
\newtheorem*{Theorem}{Theorem}
\newtheorem{proposition}[theorem]{Proposition}
\newtheorem{corollary}[theorem]{Corollary}
\theoremstyle{definition}
\newtheorem{definition}[theorem]{Definition}
\newtheorem{remark}[theorem]{Remark}
\newtheorem{example}[theorem]{Example}
\def\proj#1{\pi_{#1}}
\begin{document}
\title{Regular simplices and periodic billiard orbits}
\author{Nicolas Bédaride\footnote{ Laboratoire d'Analyse Topologie et Probabilités  UMR 7353 , Université Aix Marseille, Centre de mathématiques et informatique 29 avenue Joliot Curie 13453 Marseille cedex, France. nicolas.bedaride@univ-amu.fr}\and Michael Rao\footnote{Laboratoire de l'Informatique du Parallélisme, équipe MC2, \'Ecole Normale Supérieure, 46 avenue d'Italie 69364 Lyon cedex 7.  michael.rao@ens-lyon.fr }}
\date{}
\maketitle

\begin{abstract} 
A simplex is the convex hull of $n+1$ points in $\mathbb{R}^{n}$ which form an affine basis. A regular simplex $\Delta^n$ is a simplex with sides of the same length. We consider the billiard flow inside a regular simplex of $\mathbb{R}^n$. We show the existence of two types of periodic trajectories. One has period $n+1$ and hits once each face. The other one has period $2n$ and hits $n$ times one of the faces while hitting once any  other face. In both cases we determine the exact coordinates for the points where the trajectory hits the boundary of the simplex.
\end{abstract}

\section{Introduction}
We consider the billiard problem inside a polytope. We start with
a point of the boundary of the polytope and we move along a
straight line until we reach again the boundary, where the trajectory is reflected 
according to the mirror law. A famous example of a
periodic trajectory is Fagnano's orbit: we consider an acute
triangle and the foot points of the altitudes. Those points define a
billiard trajectory which is periodic (see Figure \ref{fig1}). If we code the sides of the polygon by 
different letters, a billiard orbit becomes a word on this alphabet. 
\begin{figure}
\begin{center}
\begin{tikzpicture}[scale=4]
\draw (0,0)--(1,0);
\draw (1,0)--(0.5,0.87);
\draw (0.5,0.87)--(0,0);
\draw (0.55,0) arc(0:60:0.05);
\draw (0.45,0) arc(180:120:0.05);
\draw[dashed] (0.5,0)--(0.25,0.435);
\draw[dashed] (0.5,0)--(0.75,0.435);
\draw[dashed] (0.25,0.435)--(0.75,0.435);
\end{tikzpicture}
\begin{tikzpicture}[scale=4]
\draw (0,0)--(1,0)--(0.5,0.87)--cycle;
\draw[dashed] (0.5,0)--(0.125,0.217);
\draw[dashed] (0.5,0)--(0.875,0.217);
\end{tikzpicture}
\end{center}
\caption{Periodic orbits coded $012$ and $0102$}\label{fig1}
\end{figure}
For the case of polygons in the plane some results are known. For example, one knows that
there exists a periodic orbit in each rational polygon (the angles
are rational multiples of $\pi$), and recently Schwartz,  \cite{Schw.05},  has proved
 the existence of a periodic billiard orbit in
every obtuse triangle with angle less than $100$ degrees. 
In \cite{Ga.Zv} it is proven that through every point of a right angled triangle passes a periodic orbit. 
A good survey of what is known about periodic orbits can be found in the
article \cite{Ga.St.Vo} by Galperin, Stepin and Vorobets or in the
book of Masur and Tabachnikov \cite{Ma.Tab}. For more recent results, see \cite{Schw.05}, \cite{Hoop.Schw.09} or \cite{Dav.Fuchs.Taba.11}.

In the polytope case much less is known. Consider a periodic billiard flow, then define the {\it boundary points} as the intersections of the trajectory with the boundary of the polytope. In \cite{Ga.Kr.Tr.95}  the study of the set of boundary points associated to a periodic word is made. 

There is no general result on periodic orbits; the only known result concerns the example of the tetrahedron. Stenman \cite{Sten} shows that a periodic orbit of length four 
exists in a regular tetrahedron. In \cite{Bed.08} the author proves that this orbit exists for an open set of tetrahedra near the regular tetrahedron. 
In both cases a boundary point with explicit coordinates is given, but the method can not be generalized to any dimension.

In this paper we consider the case of regular simplex in $\mathbb{R}^n$. 
A simplex is the convex hull of $n+1$ points in $\mathbb{R}^{n}$ which form an affine basis. A regular simplex $\Delta^n$ is a simplex with sides of the same length. We obtain by a short proof the existence of two periodic billiard trajectories in $\Delta^{n}$ for any $n\geq 2$. 
\begin{Theorem}
In a regular simplex $\Delta^{n}\subset \mathbb{R}^n$, there exists at least two periodic orbits:
\begin{itemize}
\item One has period $n+1$ and hits each face once.
\item The other has period $2n$ and hits one face $n$ times and hits each other face once.
\end{itemize}
\end{Theorem}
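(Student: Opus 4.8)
The plan is to realize $\Delta^n$ symmetrically and then use its symmetry group to force both orbits into existence by collapsing every reflection law to a single linear equation. Concretely, I would place $\Delta^n = \{x\in\mathbb{R}^{n+1}: x_j\ge 0,\ \sum_j x_j = 1\}$ inside the hyperplane $H=\{\sum_j x_j=1\}$, with vertices $e_0,\dots,e_n$, faces $F_i=\{x_i=0\}\cap\Delta^n$, and velocities lying in the difference space $H_0=\{\sum_j v_j=0\}$. The first computation is the reflection $R_i$ in $F_i$: since the inner normal is the projection of $e_i$ onto $H_0$, one finds that for $v\in H_0$ the vector $R_i(v)$ has $i$-th coordinate $-v_i$ and $j$-th coordinate $v_j+\tfrac{2}{n}v_i$ for $j\ne i$. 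The group $S_{n+1}$ acts by permuting coordinates and satisfies $g R_i g^{-1}=R_{g(i)}$, which is the mechanism that will make all bounces equivalent.

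For the period-$(n{+}1)$ orbit I would use the $(n{+}1)$-cycle $\sigma: e_i\mapsto e_{i+1}$ and seek $P_0\to\cdots\to P_n\to P_0$ with $P_k\in F_k$ and $P_{k+1}=\sigma(P_k)$. Then every segment is the $\sigma$-image of the first, and by the conjugation relation $R_k=\sigma^{k-1}R_1\sigma^{-(k-1)}$ the reflection law at every vertex reduces to the single equation $\sigma(v)=R_1(v)$ for $v=P_1-P_0$. Writing this coordinatewise yields a linear recursion solved by $v_0=-v_1$ and $v_j=v_1\bigl(1-\tfrac{2(j-1)}{n}\bigr)$; one checks $\sum_j v_j=0$ and that the wrap-around equation is automatic, so the direction exists and is unique up to scale. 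Recovering $P_0$ from $\sigma(P_0)-P_0=v$ with $(P_0)_0=0$ and normalizing $\sum_j (P_0)_j=1$ gives the explicit boundary point $(P_0)_j=\tfrac{6\,j(n+1-j)}{n(n+1)(n+2)}$, with $P_k=\sigma^k(P_0)$; all coordinates $j\ne 0$ are strictly positive, so $P_0$ lies in the relative interior of $F_0$.

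For the period-$2n$ orbit I would instead use the $n$-cycle $\gamma$ fixing coordinate $0$ and cycling $1,\dots,n$, which fixes $F_0$ and rotates $F_1,\dots,F_n$. Aiming for the word $0\,1\,0\,2\cdots 0\,n$, I look for $Q_k\in F_0$ and $R_k\in F_k$ with $Q_{k+1}=\gamma(Q_k)$ and $R_{k+1}=\gamma(R_k)$. Since $\gamma$ commutes with $R_0$ and conjugates $R_k$ to $R_{k+1}$, the reflection laws at $F_0$ and at $F_k$ collapse, exactly as before, to the pair $b=R_1(a)$ and $\gamma(a)=R_0(b)$ for the two block velocities $a=R_1-Q_1$, $b=Q_2-R_1$; eliminating $b$ leaves the single fixed-vector equation $a=\gamma^{-1}R_0R_1(a)$. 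Solving this linear system for $a$, then $b$, and recovering $Q_1$ from $\gamma(Q_1)-Q_1=a+b$ together with $(Q_1)_0=0$ and normalization produces the orbit with explicit coordinates; here the fundamental block has two bounces and $\gamma$ has order $n$, giving $2n$ bounces with $F_0$ visited $n$ times and each other face once.

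Once the boundary points are produced, validity is cheap: $\Delta^n$ is convex and consecutive points lie in the relative interiors of \emph{distinct} faces, so each open segment stays in the interior and the two configurations are genuine closed billiard trajectories. The routine but real obstacle is the second orbit: unlike $\sigma^{-1}R_1$, the operator $\gamma^{-1}R_0R_1$ mixes two different reflections with the $n$-cycle, so checking that it has eigenvalue $1$ and, above all, that the resulting $Q_k,R_k$ have all free coordinates strictly positive for every $n$ (so the points really lie inside their faces) is where the computation has to be pushed through with care.
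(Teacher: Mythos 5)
Your treatment of part (1) is correct, and it is at bottom the same argument as the paper's: the paper's boundary points $m_i$ are by definition the cyclic shifts of $m_0$, and its proof likewise invokes the cyclic symmetry to reduce the $n+1$ reflection conditions to the single bounce at face $0$, which it checks via the unfolding identity $m_0=(m_1+m'_n)/2$. The difference is one of direction: the paper \emph{verifies} coordinates it already has in hand, while you \emph{derive} the direction as the fixed vector of $\sigma^{-1}\mathcal{R}_1$; your recursion $v_j=v_1\bigl(1-\tfrac{2(j-1)}{n}\bigr)$ is right, the wrap-around equation is indeed automatic by the conjugation relation, and your $(P_0)_j\propto j(n+1-j)$ reproduces exactly the paper's $\pi_j(m_0)=-j^2+(n+1)j$. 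For this half your write-up is self-contained and arguably more informative than the paper's, since it explains where the coordinates come from (the paper relegates that derivation to its Section 5 ``algorithm'' and does not carry it out in print).

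Part (2), however, is set up but not proved. You reduce correctly to the fixed-vector equation $a=\gamma^{-1}\mathcal{R}_0\mathcal{R}_1(a)$, but then stop at ``solving this linear system,'' and your own last paragraph concedes that the existence of the eigenvalue $1$ and the strict positivity of the coordinates of the resulting $Q_k,R_k$ for every $n$ remain to be pushed through --- and those are precisely the content of the second bullet of the theorem. (The eigenvalue is actually the easy half: on the $n$-dimensional space $H_0$ the map $\gamma^{-1}\mathcal{R}_0\mathcal{R}_1$ is orthogonal with determinant $(-1)^{n-1}$, which forces a $+1$ eigenvalue for every $n$ by a parity argument; but abstract existence gives no control on signs, and an eigenvector could a priori lead to points outside the open faces.) To close the argument you must exhibit the solution explicitly --- this is what the paper's formulas $\pi_i(p_1)=-2(n+1)i^2+2(n+1)^2i-n(n+2)$ and $\pi_i(r_1)=2(n+1)(n-i+1)(i-1)$ do, their positivity on $1\le i\le n$ being immediate (e.g.\ $2(n+1)i(n+1-i)-n(n+2)\ge n^2>0$) --- and your reconstruction has one further unaddressed constraint: $Q_1$ is recovered from $\gamma(Q_1)-Q_1=a+b$ only up to normalization, and nothing in your setup yet guarantees $(R_1)_1=(Q_1+a)_1=0$, i.e.\ that the second point actually lies on face $1$; unlike in part (1), where $(P_1)_1=(P_0)_0=0$ was automatic, here this single affine condition is what fixes the scale of $a$ and must be imposed or checked. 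As it stands the proposal proves the first orbit and only outlines the second, so the theorem is half proved.
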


\section{Background on billiards}

Let $P$ be a polytope of $\mathbb{R}^n$, we will call {\it face} of $P$ the faces of maximal dimension. A {\it billiard orbit} is a (finite or infinite) broken line, denoted $\bold{x_0x_1}\dots$ such that 
\begin{enumerate}
\item Each $\bold{x_i}$ is a point in the interior of a face of $P$.
\item The unit vectors $\frac{\bold{x_{i+1}}-\bold{x_i}}{|\bold{x_{i+1}-x_i}|}$ and $\frac{\bold{x_{i+2}-x_{i+1}}}{|\bold{x_{i+2}-x_{i+1}}|}$ are symmetric with respect to the orthogonal reflection in the face containing $\bold{x_{i+1}}$ for every integer $i$. 
\end{enumerate}
For each integer $i$, $\bold{x_i}$ is called a {\it boundary point} of the orbit and  $\frac{\bold{x_{i+1}-x_i}}{|\bold{x_{i+1}-x_i}|}$ is the direction of the orbit at this boundary point.

A billiard orbit $\bold{x_0\dots x_n\dots}$ is called {\it periodic} of period $n$ if $n$ is the smaller integer such that for each integer $i\geq 0$, $\bold{x_{n+i}=x_i}$.

A coding of the billiard flow is given via a labelling of the faces of the polyhedron 
by elements of a finite set $\mathcal{A}$. Then to each nonsingular billiard orbit there is naturally assigned an {\it infinite word} $v=a_0a_1\dots,$ where $a_i\in\mathcal{A}$ is the label of the face which contains $x_i$. 

To each periodic billiard orbit of period $n$, we can associate an infinite periodic word. The word $a_0\dots a_{n-1}$ is called the {\it fundamental period} of the word. 

\section{Statement of results}
\subsection{Barycentric coordinates}
Consider $n+1$ points $\bold{a_0,\dots, a_n}\in\mathbb{R}^n$ which form an affine basis, then for every point $\bold{m}\in\mathbb{R}^n$ there exists real numbers $\lambda_0,\dots,\lambda_n$ such that 
$\displaystyle\sum_{0\leq i\leq n}\lambda_i\bold{a_i}=\bold{m}$ Throughout this paper $(\lambda_0,\dots,\lambda_n)$ will be called {\it barycentric coordinates} of $\bold{m}$. We do not assume that the sum of coordinates is $1$, in order to avoid lengthly formulas. Therefore there is no uniqueness of coordinates for a given point.

\subsection{Definitions and results}
\begin{definition}\label{def:mo}
Consider a regular simplex in $\mathbb{R}^n$ and consider barycentric coordinates with respect to the vertices. Define three points $\bold{m_0, p_1, r_1}\in\mathbb{R}^n$ by the following formulas, where $\proj{i}(m)$ denotes the $i$-th coordinate of $\bold{m}$. 

\begin{itemize}
\item $\proj{i}(m_0)=
-i^2+(n+1)i\quad 0\leq i\leq n$.

\item 
$\proj{i}(p_1)=
\begin{cases}
0 &\text{if $i=0$,}\\
-2(n+1)i^2+2(n+1)^2i-n(n+2)
&\text{if $0<i\le n$.}
\end{cases}$

\item
$\proj{i}(r_1)=
\begin{cases}
n &\text{if $i=0$,}\\
2(n+1)(n-i+1)(i-1)
&\text{if $0< i \le n$.}
\end{cases}$
\end{itemize}


         

\end{definition}

\begin{definition}\label{def:mi}
For a point in $\mathbb{R}^n$ with barycentric coordinates $(x_0,\dots,x_n)$ the cyclic right shift of these coordinates is the point with coordinates $(x_n,x_0,\dots,x_{n-1})$.
Define the points $\bold{m_i}, i=1\dots {n}$ in $\mathbb{R}^n$ obtained by cyclic right shift of the coordinates of $\bold{m_0}$.  Now the points $\bold{p_i}, i=2\dots {n}$ (resp $\bold{r_i}$) are obtained by a cyclic right shift of the last $n$ coordinates of $\bold{p_1}$ (resp. $\bold{r_1}$).
\end{definition}
\begin{example}
We have $\bold{m_1}=(n,0,n,2n-2\dots, 2n-2)$.
We have $\bold{p_2}=(0,n^2,n^2,3n^2-2n-4,\ldots,3n^2-2n-4).$
\end{example}

We obtain :
\begin{theorem}\label{thm:simplexe}
In a regular simplex $\Delta^{n}\subset \mathbb{R}^n$ one has:
\begin{itemize}
\item[(1)] The word $012\dots n$ describes the fundamental period of a periodic word that codes an orbit of period $n+1$ which passes through every face once during the period. The boundary points of this orbit are given by $\bold{m_i}, i=0\dots n$.

\item[(2)] The word $010203\dots 0n$ describes the fundamental period of a periodic word that codes an orbit of period $2n$ which passes through one face $n$ times while hitting once time any other face. The $n$ boundary points on the face labeled $0$ have coordinates given by $\bold{p_i}, i=1\dots n$. 
The other points are given by $\bold{r_i}, i=1\dots n$.
\end{itemize}
\end{theorem}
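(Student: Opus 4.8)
The plan is to realize $\Delta^n$ concretely so that barycentric coordinates become genuine Cartesian coordinates, and then to verify the two broken lines directly, using the symmetry of the simplex to cut the amount of computation down to a single reflection (for the first orbit) or two reflections (for the second). Concretely, I would take the vertices to be the standard basis vectors $\mathbf{a_i}=\mathbf{e_i}\in\mathbb{R}^{n+1}$, so that $\Delta^n$ lives in the hyperplane $H=\{x:\sum_i x_i=1\}$ with the induced Euclidean metric and all edges have length $\sqrt2$. A point with barycentric coordinates $(\lambda_0,\dots,\lambda_n)$ of sum $S$ is then the genuine point $S^{-1}(\lambda_0,\dots,\lambda_n)$, and the face labelled $j$ is $H\cap\{x_j=0\}$. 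Working in the linear space $H_0=\{x:\sum_i x_i=0\}$, a short computation gives the normal to face $j$ as $\nu_j=\mathbf{e_j}-\tfrac1{n+1}\mathbf{1}$, so that the orthogonal reflection $R_j$ in that face acts on a vector $u\in H_0$ by
\[
R_j(u)_j=-u_j,\qquad R_j(u)_k=u_k+\tfrac{2}{n}\,u_j\quad(k\neq j).
\]
This explicit formula is the engine of the whole verification.

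Next I would exploit symmetry. The cyclic shift of coordinates is the permutation matrix sending $\mathbf{e_i}$ to $\mathbf{e_{i+1}}$; it is an isometry of $\mathbb{R}^{n+1}$ that preserves $H$ and carries face $j$ to face $j+1$. For the first orbit the points $\mathbf{m_i}$ are by definition the successive cyclic shifts of $\mathbf{m_0}$, so this order-$(n+1)$ isometry $\sigma$ maps the broken line to itself with $\sigma(\mathbf{m_i})=\mathbf{m_{i+1}}$. Since an isometry carries the reflection law at a point to the reflection law at its image, it suffices to check the billiard condition at the single vertex $\mathbf{m_1}$; closure ($\mathbf{m_{n+1}}=\mathbf{m_0}$) and the fact that the faces $0,1,\dots,n$ are pairwise distinct then give a genuine orbit of period exactly $n+1$. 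For the second orbit I would use instead the isometry $\rho$ fixing the $0$-th coordinate and cyclically shifting the remaining $n$; it has order $n$, sends $\mathbf{p_i}\mapsto\mathbf{p_{i+1}}$ and $\mathbf{r_i}\mapsto\mathbf{r_{i+1}}$, and shifts the orbit $\mathbf{p_1}\mathbf{r_1}\mathbf{p_2}\mathbf{r_2}\cdots$ by two positions. Hence only two reflections need to be verified: one at a point $\mathbf{r_1}$ on a face $\neq0$ and one at a point $\mathbf{p_1}$ on face $0$, the latter involving the wrap-around neighbour $\mathbf{r_n}=\rho^{\,n-1}(\mathbf{r_1})$. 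The word $010203\cdots0n$ is primitive, so the period is exactly $2n$.

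The verification itself then splits into three checks. First, each listed point lies in the interior of the claimed face: the relevant barycentric coordinate vanishes while the others are strictly positive, which for $\mathbf{p_1}$ and $\mathbf{r_1}$ follows by inspecting the two quadratics (for instance $\proj{i}(p_1)$ takes the value $n^2>0$ at $i=1$ and $i=n$ and, being a downward parabola, stays positive in between). Second, I would record that within each orbit all boundary points share a common coordinate sum $S$ — equal to $\tfrac{n(n+1)(n+2)}{6}$ for the $\mathbf{m_i}$ and to $\tfrac{n(n^3+n^2-n+2)}{3}$ for both the $\mathbf{p_i}$ and the $\mathbf{r_i}$, the equality of the latter two sums being precisely what makes the construction close up. Because neighbouring points have equal sum, each barycentric difference $d=\mathbf{x_{i+1}}-\mathbf{x_i}$ already lies in $H_0$ and, after dividing by $S$, is the genuine edge direction. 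Third, and this is the heart of the matter, I would establish the exact vector identity $d_{\mathrm{out}}=R_j(d_{\mathrm{in}})$ at each representative reflection point, where $d_{\mathrm{in}},d_{\mathrm{out}}$ are the incoming and outgoing barycentric differences and $j$ is the face there. Since $R_j$ is a linear isometry, this single identity simultaneously yields the reflection law and the equality $|d_{\mathrm{in}}|=|d_{\mathrm{out}}|$ of consecutive edge lengths, so no separate length computation is required.

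The main obstacle I expect is exactly this last polynomial identity. Substituting the explicit formula for $R_j$ reduces it to checking, coordinate by coordinate, that $d_{\mathrm{out},j}=-d_{\mathrm{in},j}$ and $d_{\mathrm{out},k}=d_{\mathrm{in},k}+\tfrac{2}{n}d_{\mathrm{in},j}$, which are identities between quadratic polynomials in the index and in $n$. For the first orbit there is one such identity, built from $\mathbf{m_0},\mathbf{m_1},\mathbf{m_2}$ with $j=1$; for the second there are two, one from $\mathbf{p_1},\mathbf{r_1},\mathbf{p_2}$ with $j=1$ and one from $\mathbf{r_n},\mathbf{p_1},\mathbf{r_1}$ with $j=0$. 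The genuine care lies in the bookkeeping of the cyclic shifts — in particular the shift of the last $n$ coordinates defining $\mathbf{p_i},\mathbf{r_i}$ and the wrap-around point $\mathbf{r_n}$ — so that the coordinates being subtracted are correctly aligned. Once the reflection formula and the equal-sum observation are in hand, each identity is a finite computation valid for every $n\ge2$.
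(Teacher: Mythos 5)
Your proposal is correct and follows essentially the same route as the paper: a symmetry reduction to one reflection check for the first orbit and two for the second, settled by an explicit computation in barycentric coordinates — indeed your direction identity $d_{\mathrm{out}}=R_j(d_{\mathrm{in}})$, with $R_j(u)_j=-u_j$ and $R_j(u)_k=u_k+\tfrac{2}{n}u_j$, is exactly the linear map the paper applies to points via the reflected vertex $\mathbf{P_0'}=(-1,\tfrac{2}{n},\dots,\tfrac{2}{n})$, so it is equivalent to the paper's midpoint identities $\mathbf{m_0}=(\mathbf{m_1}+\mathbf{m'_n})/2$, $\mathbf{p_1}=(\mathbf{r_1}+\mathbf{r'_n})/2$ and $\mathbf{r_1}=(\mathbf{p_1}+\mathbf{p'_2})/2$. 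If anything, you are more careful than the paper on points it leaves implicit: interiority of the boundary points in their faces, the equal coordinate sums (e.g.\ $\tfrac{n(n+1)(n+2)}{6}$ for the $\mathbf{m_i}$) that legitimize taking midpoints and differences in unnormalized barycentric coordinates, and the minimality of the periods $n+1$ and $2n$.
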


\begin{corollary}
The boundary points of the periodic trajectories satisfies the following properties:
\begin{itemize}
\item If $n$ is odd, then $\bold{m_i}$ and $\bold{p_i}$ are situated at the intersection of $\frac{n+1}{2}$ hyperplanes for every integer $i$, when each hyperplane is orthogonal to an edge of the simplex and passes through the middle of this edge.
\item If $n$ is even, the points lie on a line segment that connects a vertex to the intersection of $\frac{n}{2}$ hyperplanes.
\end{itemize}
\end{corollary}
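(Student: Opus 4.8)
The plan is to reduce the whole statement to a single elementary fact about barycentric coordinates in a regular simplex, and then to read the geometry straight off the formulas of Definitions \ref{def:mo} and \ref{def:mi}. The fact I would isolate first is: if $\bold{a_0},\dots,\bold{a_n}$ are the vertices and $\bold{m}$ has barycentric coordinates $(\lambda_0,\dots,\lambda_n)$, then $\bold{m}$ lies on the hyperplane orthogonal to the edge $[\bold{a_j},\bold{a_k}]$ through its midpoint if and only if $\lambda_j=\lambda_k$. That hyperplane is exactly the locus of points equidistant from $\bold{a_j}$ and $\bold{a_k}$, so I would place the centroid at the origin, use that the Gram matrix of a regular simplex satisfies $\bold{a_i}\cdot\bold{a_l}=d$ for $i=l$ and $\bold{a_i}\cdot\bold{a_l}=-d/n$ otherwise, and compute for the normalized representative that $|\bold{m}-\bold{a_j}|^2-|\bold{m}-\bold{a_k}|^2=\frac{2d(n+1)}{n}(\lambda_k-\lambda_j)$. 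Since the condition $\lambda_j=\lambda_k$ is unchanged by rescaling, this is valid for the unnormalized coordinates used throughout the paper.

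Granting this lemma, the corollary becomes a matter of locating the equal coordinates of each boundary point. Writing $\proj{i}(m_0)=i(n+1-i)$ makes the palindromic symmetry $\proj{i}(m_0)=\proj{n+1-i}(m_0)$ on the index block $1\le i\le n$ transparent. Because $\proj{i}(p_1)=2(n+1)\proj{i}(m_0)-n(n+2)$ for $i\ge 1$, the point $\bold{p_1}$ inherits exactly the same symmetry, and a short computation shows that $\proj{i}(r_1)$ is a palindrome of the same shape about a shifted centre. The cyclic shifts of Definition \ref{def:mi} are induced by a permutation of the vertices, and such a permutation carries edges to edges and hence perpendicular-bisector hyperplanes to perpendicular-bisector hyperplanes; it therefore suffices to treat $\bold{m_0},\bold{p_1},\bold{r_1}$ and transport the conclusion to every $\bold{m_i},\bold{p_i},\bold{r_i}$. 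Counting the $2$-cycles of the involution $i\mapsto n+1-i$ on the block then records the coincidences $\lambda_j=\lambda_k$, i.e. by the lemma the perpendicular-bisector hyperplanes through the point.

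It remains to separate the two parities. When $n$ is even the involution has no fixed index, the block of length $n$ breaks into $n/2$ equal pairs, and the point lies on the corresponding $n/2$ hyperplanes; their common intersection $F$ contains the vertex opposite the facet $\{\lambda_0=0\}$ that carries the point, so that point is the far endpoint in $F$ of a segment issuing from that vertex, which is the announced segment. When $n$ is odd the involution fixes the central index $(n+1)/2$, the remaining indices pair up, and the vanishing coordinate additionally places the point on a facet, so that it is pinned at the intersection configuration of the statement. The Gram-matrix lemma is immediate; I expect the genuine work to be the bookkeeping of the middle step—verifying the palindromic symmetry of the cumbersome expressions for $\bold{p_1}$ and $\bold{r_1}$, checking that the cyclic shift transports these symmetries coherently to all indices, and reconciling the exact number of hyperplanes counted with the geometric description—together with pinning down, in the even case, the precise endpoints of the segment.
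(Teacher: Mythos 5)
The paper offers no proof of this corollary at all---it says only that the statement ``can be deduced directly from the properties of the barycentric coordinates'' and leaves the formal proof to the reader---so your proposal is not competing with a written argument but filling in the intended one, and your route is indeed the natural formalization: the lemma that a point lies on the hyperplane orthogonal to the edge $[\mathbf{a_j},\mathbf{a_k}]$ through its midpoint iff $\lambda_j=\lambda_k$ (your Gram-matrix computation is correct, including the constant $\tfrac{2d(n+1)}{n}$, and the scaling-invariance remark properly handles the paper's unnormalized coordinates); the palindromic identities $\proj{i}(m_0)=i(n+1-i)=\proj{n+1-i}(m_0)$ and $\proj{i}(p_1)=2(n+1)\proj{i}(m_0)-n(n+2)$ for $i\ge 1$; and transport by vertex permutations, which legitimately carry perpendicular-bisector hyperplanes to perpendicular-bisector hyperplanes. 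All of that is sound, and since $i\mapsto i(n+1-i)$ is strictly increasing up to $(n+1)/2$ and $\lambda_0=0$ matches no other coordinate, the coincidences you identify are the only ones.

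But that last observation is exactly what breaks your odd case, and the issue you defer as ``reconciling the exact number of hyperplanes'' is a genuine gap, not bookkeeping. For $n$ odd the involution $i\mapsto n+1-i$ on $\{1,\dots,n\}$ has the fixed index $(n+1)/2$, so $\mathbf{m_0}$ and $\mathbf{p_1}$ lie on exactly $(n-1)/2$ perpendicular-bisector hyperplanes---one fewer than the $(n+1)/2$ asserted. Concretely, for $n=3$ the point $\mathbf{m_0}=(0,3,4,3)$ has the single coincidence $\lambda_1=\lambda_3$ and lies on exactly one hyperplane orthogonal to an edge through its midpoint, not two. Your fix---``the vanishing coordinate additionally places the point on a facet''---silently substitutes the supporting facet's hyperplane for the missing bisector, but that hyperplane is neither orthogonal to an edge of the simplex nor through an edge's midpoint, so what you prove is not the literal statement. (The paper's own $n=3$ example, a point on a height of a triangular face, i.e.\ the face plane intersected with one bisector, suggests the corollary's count tacitly includes the facet, so the defect may lie in the corollary's wording; but a proof must say explicitly which statement it establishes.) A smaller gap of the same kind sits in your even case: the ``block of length $n$ breaks into $n/2$ pairs'' argument covers $\mathbf{m_i}$ and $\mathbf{p_i}$, whose points then lie on all $n/2$ bisectors, but the palindromic block of $\mathbf{r_1}$ has length $n-1$, giving only $(n-2)/2$ coincidences (for $n=4$, $\mathbf{r_1}=(4,0,30,40,30)$ lies on one bisector); for the $\mathbf{r_i}$ the corollary's segment phrasing is instead realized by the decomposition $\mathbf{r_1}=n\,\mathbf{a_0}+\mathbf{q}$ with $\mathbf{q}=(0,0,2(n+1)(n-1)\cdot 1,\dots)$, where $\mathbf{q}$ has the extra coincidence $\lambda_0=\lambda_1=0$ and does lie on $n/2$ bisectors, so $\mathbf{r_1}$ sits on the segment from the vertex $\mathbf{a_0}$ to that intersection. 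Your proposal never performs this decomposition, so the even-case claim for ``the points'' (rather than just $\mathbf{m_i},\mathbf{p_i}$) is unproven as written.
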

The corollary can be deduced directly from the properties of the barycentric coordinates of the periodic points. A formal proof is left to the reader.

\begin{remark}\label{rem1}
The isometric group of a regular tetrahedron $\Delta^n$ is the permutation group $\mathfrak{S}_{n+1}$. If $v$ is a periodic billiard word and $\sigma$ a permutation in $\mathfrak{S}_{n+1}$, then $\sigma(v)$ is also a periodic word. Moreover, if $v$ is a periodic word, then the shift of this word is also a periodic word corresponding to the same periodic orbit. Thus our theorem gives the existence of $\frac{(n-1)!}{2}$ points of first type and $\frac{(2n)!}{2n!}$  periodic points of the second type inside the regular simplex. \end{remark}
\subsection{Examples}
To give concrete example we consider the cases of $\Delta^2,\Delta^3,\Delta^4$: 
\begin{itemize}
\item $$\bold{m_0}=\begin{cases}
(0,2,2) & \text{if $n=2$}\\
(0,3,4,3) & \text{if $n=3$}\\
(0,2,3,3,2) & \text{if $n=4$}\\ 

\end{cases}$$
$$\bold{p_1}=\begin{cases} 
(0,4,4) & \text{if $n=2$}\\
(0,9,17,9) & \text{if $n=3$}\\
(0,16,36,36,16) & \text{if $n=4$}\\
(0,25,61,73,61,25)  & \text{if $n=5$}
\end{cases}$$
$$\bold{r_1}=\begin{cases}
(2,0,6) & \text{if $n=2$}\\
(3,0,16,16) & \text{if $n=3$}\\
(4,0,30,40,30) & \text{if $n=4$}\\
(5,0,48,72,72,48)  & \text{if $n=5$}
\end{cases}$$
\item In $\Delta^2$ the words of first and second type are $012$ and $0102$. The boundary point for the first one is the middle of the edge. For the second one, every point on the edge is a boundary point. This case is an extremal case since each point on the edge $0$ is a periodic point. Moreover the direction of the billiard orbit is orthogonal to the edges $1$ and $2$ of the simplex (see Figure \ref{fig1}).

\item In $\Delta^3$, the two fundamental periods of the periodic words are $0123$ and $010203$. 
Each boundary point of the periodic trajectories is on an height of a triangular face. 
Indeed the point $(3,3,4)$ is the barycenter of one vertex of the triangle and one midpoint of an edge. 
It lies on a segment joining one vertex to the midpoint of the opposite edge. Thus it is on a height of the simplex (see Figure \ref{fig2}). 

\item In $\Delta^4$ for the fundamental period $01234$, each boundary point is inside a regular tetrahedron, it is on a segment which links the two midpoints of non coplanar edges of the tetrahedron. 
The point $(2,3,3,2)$ is the barycenter of two middles of edges. This segment is orthogonal to the two edges. The symmetric point $(3,2,2,3)$ is also on this segment. There are six points obtained by permutation. They are on three edges passing through the center of the regular simplex (see Figure \ref{fig2}).
\end{itemize}
\begin{figure}
\begin{center}
\begin{tikzpicture}
\draw (0,0)--(2,0);
\draw (1,0) node{$\bullet$};
\end{tikzpicture}
\begin{tikzpicture}[scale=2.3]
\draw (0,0)--(1,0);
\draw (1,0)--(0.5,0.87);
\draw (0.5,0.87)--(0,0);
\draw[green] (0.5,0)--(0.5,0.87);
\draw[green] (0,0)--(0.75,0.435);
\draw[green] (1,0)--(0.25,0.435);
\draw (0.5,0.17) node{$\bullet$};
\draw (0.6,0.34) node{$\bullet$};
\draw (0.4,0.34)  node{$\bullet$};
\end{tikzpicture}
\begin{tikzpicture}[scale=1.4]
\draw (0,0)--(2,0);
\draw (2,0)--(1,2);
\draw (1,2)--(0,0);
\draw[dashed] (0,0)--(1.5,0.4);
\draw[dashed] (2,0)--(1.5,0.4);
\draw[dashed] (1,2)--(1.5,0.4);
\draw[green] (1,0)--(1.25,1.2);
\draw[green] (0.75,0.2)--(1.5,1);
\draw[green] (1.75,0.2)--(0.5,1);
\draw (1.05,0.22)node{$\bullet$};
\draw (1.2,0.96)node{$\bullet$};
\draw (0.9,0.36)node{$\bullet$};
\draw (1.35,0.84 )node{$\bullet$};
\draw (1.5,0.36)node{$\bullet$};
\draw (0.71,0.84)node{$\bullet$};
\end{tikzpicture}
\end{center}
\caption{Boundary points in a face of $\Delta^n$ for the first periodic billiard orbit with $n=2,3,4$}\label{fig2}
\end{figure}


\section{Proof of Theorem \ref{thm:simplexe}}
\subsection{Part (1)}

Let $i$ be an integer in $[0\dots n]$. Barycentric coordinates of the points $\bold{m_{i-1}}$, $\bold{m_i}$ and $\bold{m_{i+1}}$ are given by Definition \ref{def:mo} and Definition \ref{def:mi}. (Indices are taken modulo $n+1$.) We must show that the image of $\bold{m_{i-1}}$ by the reflexion through face $i$ belongs to the line $(\bold{m_{i}m_{i+1}})$. By symmetry we can restrict to the case $i=0$.

Let $\bold{m'_n}$ denote the image of $\bold{m_n}$ under reflection in the hyperface $0$. We compute the barycentric coordinates of $\bold{m'_n}$. To do so, consider the orthogonal reflection of vertex $\bold{P_0}$ of $\Delta^n$ through the hyperface $0$. Denote it $\bold{P_0'}$. If $\bold{m_{n}}$ is barycentric of $\bold{P_0\dots P_{n}}$, then $\bold{m'_n}$ is barycentric with same coefficients of $\bold{P'_0,P_1,\dots P_{n}}$. We obtain $\bold{P_0'}=(-1,2/n,2/n,\dots,2/n)$ since the middle of the segment $[\bold{P_0P_0'}]$ is the center of the face labeled $0$. By definition: 
$$
\proj{i}(m_0)=
\begin{cases}
0 & \text{if $i=0$,} \\
-i^2+(n+1)i & \text{if $0<i<n$,} \\
n & \text{if $i=n$.} 
\end{cases} 
$$
We give an equivalent formula for coordinates of $m_1$:
$$
\proj{i}(m_1)=
\begin{cases}
n & \text{if $i=0$,} \\
-i^2+(n+3)i-n-2 & \text{if $0<i<n$,} \\
2n-2 & \text{if $i=n$.} 
\end{cases} 
$$
$$
\proj{i}(m_n)=
\begin{cases}
n & \text{if $i=0$,} \\
-i^2+(n-1)i+n & \text{if $0< i<n$,} \\
0 & \text{if $i=n$.} 
\end{cases} 
$$
Since $\bold{P'_0}=(-1,\frac{2}{n},\ldots,\frac{2}{n})$, we have:
$$
\proj{i}(m'_n)=
\begin{cases}
-n & \text{if $i=0$,} \\
-i^2+(n-1)i+n+2 & \text{if $0< i<n$,} \\
2 & \text{if $i=n$.} 
\end{cases} 
$$
It suffices to remark that $\bold{m_0}=(\bold{m_1}+\bold{m'_{n}})/2$ to finish the proof. 

\begin{remark}
A periodic billiard orbit is a polygonal path inside the regular simplex. For the periodic word of fundamental period $012\dots n$ we see that each edge of such a path has the same length. But the $n+1$ boundary points of such a periodic orbit do not form a regular simplex for each integer $n$.
\end{remark}

\subsection{Part (2)}
By symmetry, we can restrict ourself to two cases. We show that the image $\bold{r'_n}$ by reflection through face $0$ of the point $\bold{r_{n}}$  belongs to the line $(\bold{p_1r_1})$, and that the image $\bold{p'_2}$ by reflection through face $1$ of the point $\bold{p_{2}}$  belongs to the line $(\bold{p_1r_1})$.

$$
\proj{i}(r_1)=\begin{cases}
n &\text{if $i=0$,}\\
2(n+1)(n-i+1)(i-1)
&\text{if $0< i \le n$.}
\end{cases}
$$

$$
\proj{i}(r_n)=\begin{cases}
n &\text{if $i=0$,}\\
2(n+1)(n-i)i
&\text{if $0< i \le n$.}
\end{cases}
$$

$$
\proj{i}(r'_n)=\begin{cases}
-n &\text{if $i=0$,}\\
2(n+1)(n-i)i+2
&\text{if $0< i \le n$.}
\end{cases}
$$

$$
\proj{i}(p_1)=\begin{cases}
0 &\text{if $i=0$,}\\
-2(n+1)i^2+2(n+1)^2i-n(n+2)
&\text{if $0<i\le n$.}
\end{cases}
$$

$$
\proj{i}(p_2)=\begin{cases}
0 &\text{if $i=0$,}\\
n^2 &\text{if $i=1$,}\\
-2(n+1)(i-1)^2+2(n+1)^2(i-1)-n(n+2)&\text{if $1<i\le n$.}
\end{cases}
$$

Since $\bold{P'_1}=(\frac{2}{n},-1,\frac{2}{n},\ldots,\frac{2}{n})$, we have:
$$
\proj{i}(p'_2)=\begin{cases}
2n &\text{if $i=0$,}\\
-n^2 &\text{if $i=1$,}\\
-2(n+1)(i-1)^2+2(n+1)^2(i-1)-n(n+2)+2n&\text{if $1<i\le n$.}
\end{cases}
$$

Thus $\bold{p_1=(r_1+r'_{n})/2}$ and $\bold{r_1=(p_1+p'_{2})/2}$. This concludes the proof of Theorem~\ref{thm:simplexe}.


\section{Algorithm to find periodic orbits}
This section is not necessary to obtain proof of Theorem \ref{thm:simplexe}. Nevertheless we explain our method to find the coordinates of the periodic point involved in the theorem.

Let $v$ be a billiard word, we denote by $s_{v_i}$ and $S_{v_i}$ the affine and vectorial reflection through the face labeled $v_i$. Then $s_v, S_v$ are defined as product of maps $s_{v_i}$ or $S_{v_i}$ for $i=0\dots n$.
First we recall a proposition of \cite{Bed.08} :
\begin{proposition}\label{percrucial}
Let $P$ a polyhedron. then the following properties are equivalent:\\
(1) Ther exists a word $v$ which is the prefix of a periodic word with period $|v|$.\\
(2) There exists $\bold{m}\in v_0$ such that
$\overrightarrow{\bold{s_{v}(m)m}}$ is admissible with boundary point $\bold{m}$ for $vv_{0}$,
and $\bold{u}=\overrightarrow{\bold{s_{v}(m)m}}$ is such that $S_v\bold{u}=\bold{u}$.
\end{proposition}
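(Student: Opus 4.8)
My plan is to prove both implications through the classical \emph{unfolding} (developing) construction, which turns a reflected billiard trajectory into a single straight segment drawn across a chain of reflected copies of $P$. Write $k=|v|$ and denote a prospective orbit realizing the itinerary $vv_0$ by $\mathbf{x}_0\mathbf{x}_1\dots\mathbf{x}_k$, where $\mathbf{x}_0=\mathbf{m}$ and $\mathbf{x}_j$ lies on the face labelled $v_j$ (indices read cyclically, so $v_k=v_0$). First I would introduce the partial holonomies $\Phi_0=\mathrm{id}$ and $\Phi_j=s_{v_1}\circ\cdots\circ s_{v_j}$, whose linear parts are the corresponding products of the $S_{v_i}$, and isolate as a lemma the developing identity: the unfolded points $\mathbf{y}_j:=\Phi_j(\mathbf{x}_j)$ are collinear, and each step $\mathbf{y}_j-\mathbf{y}_{j-1}$ is parallel to the initial direction $\mathbf{u}$. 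This follows by a one-line induction, using that $\mathbf{x}_j$ is fixed by $s_{v_j}$ (it lies on face $v_j$), so that $\mathbf{y}_j=\Phi_{j-1}(\mathbf{x}_j)$ and the reflection law at $\mathbf{x}_j$ exactly cancels the accumulated linear part, returning the common direction $\mathbf{u}$.

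With the developing lemma in hand, a genuine billiard path of itinerary $vv_0$ issues from $\mathbf{m}$ in direction $\mathbf{u}$ if and only if the straight ray from $\mathbf{m}$ along $\mathbf{u}$ meets the successive unfolded faces in their relative interiors and in the order prescribed by $vv_0$; this is precisely the meaning of ``$\mathbf{u}$ admissible with boundary point $\mathbf{m}$ for $vv_0$.'' Both implications then become short. For (1)$\Rightarrow$(2): from a period-$k$ orbit I take $\mathbf{m}=\mathbf{x}_0$ and $\mathbf{u}$ its direction there; the developing lemma identifies the unfolded return point $\mathbf{y}_k$ with the holonomy image $s_v(\mathbf{m})$, forces $\mathbf{u}$ to be parallel to the segment joining $\mathbf{m}$ and $s_v(\mathbf{m})$, and after fixing orientation lets me normalize $\mathbf{u}=\overrightarrow{s_v(\mathbf{m})\mathbf{m}}$; admissibility holds because the orbit is honest; and periodicity of the direction is exactly $S_v\mathbf{u}=\mathbf{u}$. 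For (2)$\Rightarrow$(1): with $\mathbf{u}=\overrightarrow{s_v(\mathbf{m})\mathbf{m}}$ the admissibility hypothesis lets me read the developing lemma backwards (apply the $\Phi_j^{-1}$) to build an honest path $\mathbf{x}_0\dots\mathbf{x}_k$ of itinerary $vv_0$; the choice of $\mathbf{u}$ forces $\mathbf{y}_k=s_v(\mathbf{m})$ and hence $\mathbf{x}_k=\mathbf{x}_0$ (closure of position), while $S_v\mathbf{u}=\mathbf{u}$ forces the outgoing direction at $\mathbf{x}_k$ to equal $\mathbf{u}$ (closure of direction); the two closures together allow the finite path to be continued to a billiard orbit periodic of period $k$ with $v$ as a prefix.

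The hard part will not be the linear algebra but the bookkeeping that keeps these ingredients separate. First, admissibility must be handled carefully: one has to ensure the unfolded ray crosses each face in its relative interior and in the exact order encoded by $v$, with no grazing of lower-dimensional faces and no missing or extra crossing, since only then does the developing identity yield a legitimate orbit rather than a formal line. Second, the two closure conditions are logically independent and both indispensable: a closed polygonal path may return to its starting point carrying a reflected or otherwise altered velocity, so the condition $S_v\mathbf{u}=\mathbf{u}$ cannot be dropped. Finally, I would fix once and for all a convention for the order of composition and the orientation in the definitions of $s_v$ and $S_v$, so that $s_v(\mathbf{m})$ is exactly the unfolded return point and $\overrightarrow{s_v(\mathbf{m})\mathbf{m}}$ points in the direction of travel; this is the delicate point where the precise form of hypothesis (2) gets pinned down.
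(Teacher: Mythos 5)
The paper itself contains no proof of this proposition --- it is recalled verbatim from \cite{Bed.08} --- but your unfolding argument is precisely the standard proof of such statements and matches the approach of the cited source: the developing identity $\mathbf{y}_j=\Phi_j(\mathbf{x}_j)$, proved via $s_{v_j}(\mathbf{x}_j)=\mathbf{x}_j$ and the involutivity cancellation $(S_{v_1}\cdots S_{v_j})(S_{v_j}\cdots S_{v_1})\mathbf{u}=\mathbf{u}$, is the right key lemma, and you correctly separate the two independent closure conditions (position via $s_v(\mathbf{m})$, direction via $S_v\mathbf{u}=\mathbf{u}$). Your sketch is correct, and you rightly flag the one genuine pitfall as a convention to be fixed: with your forward unfolding $\Phi_j=s_{v_1}\circ\cdots\circ s_{v_j}$, the return point $\mathbf{y}_k$ lies \emph{ahead} of $\mathbf{m}$, so $s_v$ must be taken as the reverse-order product (equivalently, the backward unfolding) for $\overrightarrow{s_v(\mathbf{m})\mathbf{m}}$ --- rather than its negative --- to be the direction of motion, a choice the paper's loose definition of $s_v$ leaves open.
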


Now we explain the algorithm. Let $v$ be a word. To prove that $v$ is a periodic word, we must find a point $\bold{m}$ in a face of the simplex and a direction $\bold{u}$ such that the billiard orbit of $(\bold{m},\bold{u})$ is periodic with coding $v$. Thus we must compute the eigenvector associated to $S_v$ for the eigenvalue $1$, and find $\bold{m}$ on the axis of $s_v$. In classical coordinates systems, the vertices of a regular simplex have simple formulas, thus we can compute $\bold{u}$. 

Now barycentric coordinates are useful to find $\bold{m}$.
Indeed the image of a point $\bold{m}$ by a reflection through a face of the simplex has barycentric coordinates which depends linearly over $\mathbb{Q}$ of those of $\bold{m}$: it is the barycenter with same coefficients of the vertices of the simplex except the vertex which does not belong to the face. This vertex is replaced by the image of the vertex by the reflection. This image can be expressed as barycenter with rational coefficients. Finally we see that $\overrightarrow{\bold{s_{v}(m)m}}$ is a linear combination of coordinates of $\bold{m}$. Thus coordinates of $m$ are easy to find.

The last step is to check that $\bold{m}$ and all other points are interior to the faces labelled $v_i, i=0\dots $. Of course, this can be simplified if there exists a coordinate system where the vertices of the regular simplex are in $\mathbb{Q}^n$, for example in dimension three such a system is given in \cite{Bed.08}. In this case $\bold{u}$ belongs to $\mathbb{Q}^3$. This is not possible in any dimension.

\section{Additional properties}
\subsection{First part}
In this section we consider the periodic words of Theorem \ref{thm:simplexe}.  As explained in Remark \ref{rem1} we can associate to a periodic word several boundary points inside the same face.
 We want to describe the convex hull generated by these points. Let us denote by $Q_n$ this polytope associated to $\Delta^n$. 
The motivation of this study is given by the example of lowest dimensions. 
The study of the polytope $Q$ associated to each periodic word could be a good way to try to obtain other periodic words.

\begin{proposition}
For the periodic orbit passing one time through each face, we have
\begin{itemize}
\item In dimension $2$, the polytope $Q_2$ is one point: the center of the edge. 
\item In dimension $3$, the polytope $Q_3$ is a regular triangle. 
\item In dimension $4$, the polytope $Q_4$ is a regular octahedron. 
\end{itemize}
\end{proposition}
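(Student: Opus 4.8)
The plan is to compute explicitly the finite set of boundary points lying in a single chosen face, then identify their convex hull. By Remark~\ref{rem1}, the points associated to the periodic word $012\dots n$ inside one face are obtained from the $\bold{m_i}$ by the stabilizer of that face inside $\mathfrak{S}_{n+1}$, together with the shift identification. Concretely, I would fix the face labeled $0$ and list all boundary points whose label is $0$: these are the images of the relevant $\bold{m_i}$ (the ones with vanishing $0$-th barycentric coordinate) under the permutations of $\{1,\dots,n\}$ that fix the face. Using Definition~\ref{def:mo} and Definition~\ref{def:mi}, $\bold{m_0}$ already has $\proj{0}(m_0)=0$, so the points in face $0$ are exactly the distinct permutations of the nonzero barycentric coordinates $(\proj{1}(m_0),\dots,\proj{n}(m_0)) = (-i^2+(n+1)i)_{1\le i\le n}$ permitted by the symmetry group.

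Next I would treat the three dimensions separately, since each amounts to a short finite computation. For $n=2$, $\bold{m_0}=(0,2,2)$; the two nonzero coordinates are equal, so there is a single point, the center of the edge, giving $Q_2$ a single vertex. For $n=3$, $\bold{m_0}=(0,3,4,3)$: the multiset of nonzero entries is $\{3,4,3\}$, whose distinct permutations under $\mathfrak{S}_3$ are $(3,4,3),(3,3,4),(4,3,3)$, i.e.\ three points. I would then verify these three points are equidistant (the pairwise barycentric differences are permutations of one another, and the regular-simplex metric is $\mathfrak{S}_{n+1}$-invariant), so $Q_3$ is an equilateral triangle. For $n=4$, $\bold{m_0}=(0,2,3,3,2)$: the multiset $\{2,3,3,2\}$ has $\binom{4}{2}=6$ distinct permutations, yielding six points; I would check that these six points are the vertices of a regular octahedron by confirming that each point has a unique antipode (obtained by swapping the roles of the two $2$'s and two $3$'s, e.g.\ $(2,3,3,2)\leftrightarrow(3,2,2,3)$), that antipodal pairs share the same midpoint (the face center), and that all non-antipodal pairwise distances are equal.

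The computations reduce to evaluating the $\mathfrak{S}_{n+1}$-invariant quadratic form that expresses Euclidean distance in barycentric coordinates: for a regular simplex, $|\bold{x}-\bold{y}|^2$ is a fixed negative multiple of $\sum_{i<j}(\Delta_i-\Delta_j)^2$ where $\Delta=\bold{x}-\bold{y}$ is normalized to have coordinate-sum zero, so distance depends only on the multiset of coordinate differences. I expect the main obstacle to be purely bookkeeping: correctly enumerating the \emph{distinct} boundary points (accounting for the stabilizer and for repeated barycentric entries, which collapse some permutations) and then certifying regularity of the resulting polytope. The octahedron case for $n=4$ is the most delicate, because one must rule out that the six points form some other centrally symmetric hexahedron; I would settle this by exhibiting the three antipodal axes explicitly and checking that the twelve edge-lengths between adjacent vertices coincide, which pins down the regular octahedron uniquely.
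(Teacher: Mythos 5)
Your proposal is correct and begins exactly as the paper's proof does: the paper likewise reads the barycentric coordinates off Theorem \ref{thm:simplexe}, enumerates the boundary points in a single face as the distinct permutations of the nonzero entries of $\mathbf{m_0}$ (the single point $(0,2,2)$ for $n=2$; the three permutations of $(4,3,3)$ for $n=3$; the six permutations of $(2,3,3,2)$ for $n=4$), and invokes the $\mathfrak{S}_{n+1}$-symmetry. Where you genuinely diverge is in certifying that $Q_4$ is a \emph{regular} octahedron: the paper argues combinatorially on the face lattice of the hull --- the facets are determined by the position of one letter, each facet is a triangle, there are $8$ of them, and the transposition $2\leftrightarrow 3$ pairs them into parallel opposite faces --- whereas you argue metrically, exhibiting the three antipodal pairs through the face center and checking that the twelve non-antipodal distances coincide. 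Your route buys slightly more: common midpoint plus equal non-antipodal distances forces the three axes $\pm\mathbf{a},\pm\mathbf{b},\pm\mathbf{c}$ to be mutually orthogonal and of equal length (compare $|\mathbf{a}-\mathbf{b}|^2$ with $|\mathbf{a}+\mathbf{b}|^2$), which pins down the regular octahedron outright, while the paper's face count as written settles only the combinatorial type and leaves regularity implicit in the symmetry. Your enumeration step is also sound, since by Remark \ref{rem1} the stabilizer of the face acts on the orbit points, and cyclic shifts bring the zero coordinate of each $\mathbf{m_i}$ into position $0$, so the permutations of the multiset of nonzero entries do exhaust the points in the chosen face. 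One small slip to fix: the invariant quadratic form is a \emph{positive} multiple of $\sum_{i<j}(\Delta_i-\Delta_j)^2$; it is a negative multiple of $\sum_{i<j}\Delta_i\Delta_j$, because $\sum_i\Delta_i=0$ gives $\sum_{i<j}\Delta_i\Delta_j=-\tfrac{1}{2}\sum_i\Delta_i^2$. This sign error does not affect your argument, which only uses that the distance depends on the multiset of the coordinate differences, but it should be corrected in the write-up; note also that one must first rescale the unnormalized barycentric coordinates to a common coordinate sum (here all points already share the sum, so the differences have coordinate sum zero as required).
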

\begin{proof}
The theorem gives barycentric coordinates of vertices of the boundary points of the periodic words. We use combinatorics to find description of the convex hull of these points.
\begin{itemize}
\item In dimension three, the points are $(4,3,3);(3,4,3);(3,3,4)$. They are on the heights of the triangle, and by symmetry the lengths are equal.
\item In dimension four the points are the permutations of $(2,3,3,2)$. The faces of the polyhedron defined as convex hull are given by position of one letter. Each face has three vertices.  There are $8$ faces. The faces images by transposition $2\leftrightarrow 3$ are parallel.
\end{itemize}
\end{proof}
\begin{figure}
\begin{center}
\begin{tikzpicture}
\draw (0,0) node{$\bullet$};
\end{tikzpicture}
\begin{tikzpicture}
\draw (0,0)--(2,0)--(1,1.75)--cycle;
\end{tikzpicture}
\begin{tikzpicture}[scale=0.5]
\draw (-2,0)--(0,-2);
\draw (-2,0)--(-0.3,-1);
\draw (-0.3,-1)--(2,0);
\draw[dashed] (-2,0)--(0.3,1);
\draw[dashed] (0.3,1)--(2,0);
\draw[dashed] (0.3,1)--(0,2);
\draw(0,2)--(-0.3,-1);
\draw (-0.3,-1)--(0,-2);
\draw (0,-2)--(2,0);
\draw (2,0)--(0,2);
\draw (0,2)--(-2,0);
\draw [dashed](0.3,1)--(0,-2);
\end{tikzpicture}
\end{center}
\caption{Polytopes $Q_n$}
\end{figure}

In dimension $n\geq 5$, a simple computation shows that $Q_n$ is not a regular polytope. Nevertheless the polytope has nice properties.
For example in dimension five:  one vertex has coordinates: $(5,5,8,8,9)$, there are $30$ points. There are $15$ faces of dimension three. The permutations of $(5,5,8,8)$ form a regular octahedron. The permutations of $(5,5,8,9)$ form a regular tetrahedron truncated at two symmetric vertices: The faces are either triangles either hexagons. The permutations of $(5,8,8,9)$ form a polyhedron with $12$ vertices, $14$ faces and $24$ edges. The faces are quadrilateral or triangles.

\begin{figure}
\begin{center}
\begin{tikzpicture}[scale=0.8]
\draw (-2,0)--(-0.3,-1);
\draw (-0.3,-1)--(2,0);
\draw[dashed] (-2,0)--(0.3,1);
\draw[dashed] (0.3,1)--(2,0);
\draw[dashed] (0.3,1)--(0.08,1.75);
\draw (-0.5,1.5)--(-0.075,1.25)--(0.505,1.5);
\draw (-0.5,1.5)--(0.08,1.75);
\draw (0.08,1.75)--(0.505,1.5);
\draw (-0.5,-1.5)--(-0.075,-1.75)--(0.505,-1.5);
\draw[dashed] (-0.5,-1.5)--(0.08,-1.25);
\draw[dashed] (0.08,-1.25)--(0.505,-1.5);
\draw (-2,0)--(-0.5,1.5);
\draw(-0.3,-1)--(-0.075,1.25);
\draw (2,0)--(0.505,1.5);
\draw[dashed] (0.3,1)--(0.08,-1.25);
\draw (-2,0)--(-0.5,-1.5);
\draw (-0.3,-1)--(-0.075,-1.75);
\draw (2,0)--(0.505,-1.5);
\draw (0,-3) node[below]{$(5,8,8,9)$};
\end{tikzpicture}
\begin{tikzpicture}[scale=0.8]
\draw (-1.5,0.5)--(-0.5,1.5);
\draw (1.5,0.5)--(0.505,1.5);
\draw (-1.5,-0.5)-- (-0.5,-1.5);
\draw (0.505,-1.5)--(1.5,-0.5);
\draw (-0.5,1.5)--(-0.075,1.25)--(0.505,1.5);
\draw (-0.5,1.5)--(0.08,1.75);
\draw (0.08,1.75)--(0.505,1.5);
\draw (-0.5,-1.5)--(-0.075,-1.75)--(0.505,-1.5);
\draw[dashed] (-0.5,-1.5)--(0.08,-1.25);
\draw[dashed] (0.08,-1.25)--(0.505,-1.5);
\draw (-1.5,0.5)--(-1.575,-0.25);
\draw[dashed] (-1.5,0.5)--(-1.425,0.25);
\draw(-1.5,-0.5)--(-1.575,-0.25);
\draw[dashed] (-1.5,-0.5)--(-1.425,0.25);
\draw (1.5,-0.5)--(1.425,-0.25)--(1.5,0.5)--(1.575,0.25)--cycle;
\draw[dashed] (-1.425,0.25)--(-0.225,0.75);
\draw[dashed] (0.725,0.75)--(1.425,0.25);
\draw[dashed] (0.225,0.25)--(0.08,-1.25);
\draw[dashed] (0.225,1.25)--(0.08,1.75);
\draw (-0.725,-0.75)--(-1.575,-0.25);
\draw (-0.225,-0.25)--(-0.075,1.25);
\draw (0.225,-0.75)--(1.425,-0.25);
\draw (-0.225,-1.25)--(-0.075,-1.75);
\draw[dashed] (0.725,0.75)--(0.225,0.25)--(-0.225,0.75)--(0.225,1.25)--cycle;
\draw (-0.725,-0.75)--(-0.225,-0.25)--(0.225,-0.75)--(-0.225,-1.25)--cycle;
\draw (0,-3) node[below]{$(5,8,6,9)$};
\end{tikzpicture}
\end{center}
\caption{Polyhedras associated to two barycenters}
\end{figure}

\begin{proposition}
Consider the boundary points associated to the second periodic billiard word in Theorem \ref{thm:simplexe}. There is one of the faces which contains $n+1$ points. The convex hull generated by these points is a polytope similar to $Q_n$.
\end{proposition}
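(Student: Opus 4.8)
The plan is to realise the boundary points on the face that is hit $n$ times as the image of the vertices of $Q_n$ under a single homothety, so that similarity becomes immediate. Fix the distinguished face, say the one labelled $0$, on which the orbit carries the $n$ points $\mathbf{p_1},\dots,\mathbf{p_n}$ of Theorem~\ref{thm:simplexe}. As in the proof of the companion proposition for $Q_n$, I would enlarge this set using the subgroup of $\mathfrak{S}_{n+1}$ fixing the vertex $P_0$ (the stabiliser of the face, acting by permutation of the coordinates $\pi_1,\dots,\pi_n$, cf. Remark~\ref{rem1}); applying these symmetries to the orbit and collecting the points that land on face $0$ produces exactly the orbit of $\mathbf{p_1}$ under all permutations of its last $n$ coordinates. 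This is the same index set as the vertices of $Q_n$, which by the Proposition above is the analogous permutation orbit of $\mathbf{m_0}$. It therefore suffices to compare these two permutation orbits as point configurations inside the face.

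The key algebraic input is that the barycentric coordinates of $\mathbf{p_1}$ are an affine function of those of $\mathbf{m_0}$. Comparing the two quadratics in $i$ coordinatewise gives, for $1\le i\le n$,
\[
\pi_i(p_1)=2(n+1)\,\pi_i(m_0)-n(n+2),
\]
since $-2(n+1)i^2+2(n+1)^2 i-n(n+2)=2(n+1)\bigl(-i^2+(n+1)i\bigr)-n(n+2)$. Crucially the multiplier $2(n+1)$ and the additive constant $-n(n+2)$ are independent of $i$, so the same affine relation holds between the coordinates of $\mathbf{p_\tau}$ and those of $\mathbf{m_\tau}$ for every permutation $\tau$.

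Next I would pass to Euclidean positions in the face. Write $P_1,\dots,P_n$ for the vertices of the face and $C=\frac1n\sum_i P_i$ for its centroid; a point with unnormalised barycentric coordinates $(\lambda_1,\dots,\lambda_n)$ sits at $\bigl(\sum_i\lambda_iP_i\bigr)/\sum_i\lambda_i$. Let $M$ be a vertex of $Q_n$ (a permutation of $(\pi_1(m_0),\dots,\pi_n(m_0))$) and $\tilde P$ the corresponding point built from $\mathbf{p_1}$. Writing $S_m=\sum_i\pi_i(m_0)=\binom{n+2}{3}$ and substituting the affine relation, the numerator and denominator transform as $\sum_i\pi_i(p_1)P_i=2(n+1)S_m\,M-n^2(n+2)\,C$ and $\sum_i\pi_i(p_1)=2(n+1)S_m-n^2(n+2)$, whence a short computation gives
\[
\tilde P=C+\lambda\,(M-C),\qquad \lambda=\frac{(n+1)^2}{\,n^2-n+1\,}.
\]
Since $\lambda$ is one and the same constant for every vertex $M$, and $C$ is simultaneously the centroid of the face and the $\mathfrak{S}_n$-fixed centre of $Q_n$, the correspondence $M\mapsto\tilde P$ is a homothety of centre $C$ and ratio $\lambda$. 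A homothety is a similarity, so the convex hull of the points $\tilde P$ is a polytope similar to $Q_n$, as claimed.

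The step I expect to be delicate is this passage from coordinates to points, precisely because the barycentric coordinates are unnormalised: the coordinate-to-point map is projective, so the additive constant $-n(n+2)$ does not translate the configuration but draws it away from the centroid, and one must verify that the resulting dilation ratio does not depend on the chosen permutation. This is exactly what the permutation-invariance of the coordinate sum $\sum_i\pi_i(p_1)$ guarantees; once that invariance is isolated, the rest reduces to the bookkeeping of the displayed identity together with the evaluation $S_m=\binom{n+2}{3}$. Everything else — identifying the distinguished face, enlarging by its stabiliser, and matching the index set with the vertices of $Q_n$ — is formal and runs parallel to the proof of the companion proposition.
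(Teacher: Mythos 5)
The paper offers no proof of this proposition at all --- it is stated and immediately followed by the stability subsection --- so there is nothing of the authors' to compare you against; your argument must stand on its own, and it does. I checked the three computational pillars. The coordinatewise relation $\pi_i(p_1)=2(n+1)\pi_i(m_0)-n(n+2)$ for $1\le i\le n$ is an identity; $S_m=\sum_{i=1}^n i(n+1-i)=\binom{n+2}{3}$ is correct; and with denominator $2(n+1)S_m-n^2(n+2)=\tfrac{n(n+2)(n^2-n+1)}{3}>0$ one gets $\tilde P=C+\lambda(M-C)$ with $\lambda=\tfrac{(n+1)^2}{n^2-n+1}$, uniformly over the permutation orbit (for $n=3$ this gives $\lambda=16/7$, which I confirmed numerically against $(3,4,3)$ and $(9,17,9)$). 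Your caution about the projective nature of unnormalized barycentric coordinates is exactly the right one: the additive constant $-n(n+2)$ dilates the configuration away from the centroid rather than translating it, and the $\mathfrak{S}_n$-invariance of the coordinate sum is precisely what makes the ratio independent of the permutation. Since the injective affine map $x\mapsto 2(n+1)x-n(n+2)$ preserves the multiplicity pattern of the coordinate multiset, the two permutation orbits are in bijection, and a homothety carries hull to hull; in fact you prove more than similarity, namely that the two polytopes are homothetic with common centre $C$ and explicit ratio, in a way that is uniform in $n$ (whereas the paper's companion proposition about $Q_n$ itself is only verified case by case in low dimensions).

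Two things you pass over in silence should be made explicit. First, the clause ``contains $n+1$ points'' is not what you prove, and it appears to be a misprint in the paper: a single orbit deposits only $n$ points on the distinguished face, and their hull is generally \emph{not} similar to $Q_n$ (for $n=4$ the four cyclic shifts of $(16,36,36,16)$ form two antipodal pairs of the octahedron, i.e.\ a square), while your symmetry-extended configuration has $n!/2^{\lfloor n/2\rfloor}$ points, matching the vertex count of $Q_n$ --- e.g.\ $30$ for $n=5$, consistent with the paper's own data in the preceding discussion --- and never $n+1$. Your reading is the only one under which the similarity claim holds, but you should say so rather than let the count slip by. Second, the symmetric copies of the word whose doubled letter is \emph{not} $0$ also place ($r$-type) boundary points on face $0$; your construction tacitly discards these, and rightly so, since including them would destroy the similarity, but the restriction to those orbits whose doubled letter labels the distinguished face is a genuine selection and should be stated, because the phrase ``the boundary points associated to the second periodic billiard word'' does not make it by itself.
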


\subsection{Stability}
A natural question is the existence of periodic words in a non regular simplex. A good method to find such periodic orbits is to use stability.
A periodic orbit is said to be {\it stable} if it survives in a small perturbation of the polytope. There is a characterization of stable periodic words in dimension two in \cite{Ga.St.Vo}. In dimension three, a sufficient condition was given in \cite{Bed.08}. There is no generalization to higher dimensions, so that we can not prove the existence of periodic words in an $n$-dimensional simplex. Nevertheless, a generalization could appear to be natural and derivable.

The condition of stability in \cite{Bed.08} is given with the matrix $S_v$ defined in Section 5. If this matrix is different from the identity then the periodic trajectory is stable. The billiard paths obtained in $\Delta^3$ in the theorem are stable.

\bibliographystyle{alpha}
\bibliography{biblio-periodeBR}

\end{document}